\newtheorem{theorem}{Theorem}[section]
\newtheorem{lemma}[theorem]{Lemma}
\theoremstyle{definition}
\newtheorem{remark}[theorem]{Remark}
\newtheorem{example}[theorem]{Example}
\newtheorem*{acknowledgements}{Acknowledgements}
\begin{document}

\title{Link invariants derived from multiplexing of crossings}
\author{Haruko A. Miyazawa, Kodai Wada and Akira Yasuhara}

\address{
Institute for Mathematics and Computer Science, Tsuda University,
2-1-1 Tsuda-Machi, Kodaira, Tokyo, 187-8577, Japan}
\email{aida@tsuda.ac.jp}

\address{Department of Mathematics, Graduate School of Education, Waseda University, 1-6-1 Nishi-Waseda, Shinjuku-ku, Tokyo, 169-8050, Japan}
\email{k.wada@akane.waseda.jp}

\address{
Department of Mathematics, Tsuda University, 
2-1-1 Tsuda-Machi, Kodaira, Tokyo, 187-8577, Japan}
\email{yasuhara@tsuda.ac.jp}

\subjclass[2010]{57M25, 57M27}
\keywords{Welded link; multiplexing of crossings; generalized link group; Alexander polynomial.}


\begin{abstract}
We introduce the multiplexing of a crossing, 
replacing a classical crossing of a virtual link diagram 
with multiple crossings which is a mixture of classical and virtual.
For integers $m_{i}$ $(i=1,\ldots,n)$ and an ordered 
$n$-component virtual link diagram $D$,  
a new virtual link diagram $D(m_{1},\ldots,m_{n})$ is obtained 
from $D$ by the multiplexing of all crossings. 
For welded isotopic virtual link diagrams $D$ and $D'$, 
$D(m_{1},\ldots,m_{n})$ and $D'(m_{1},\ldots,m_{n})$ 
are welded isotopic. 
From the point of view of classical link theory, it seems very 
interesting that $D(m_{1},\ldots,m_{n})$ could not be welded isotopic to a classical link diagram 
even if $D$ is a classical one, and 
new classical link invariants are expected from known welded link invariants via 
the multiplexing of crossings. 
\end{abstract}

\maketitle

\section{Introduction}
An $n$-component {\em virtual link diagram} is 
a generic immersed $n$ circles in a plane 
whose singularities are transverse double points, 
that are labeled either as a {\it classical crossing} or as a {\it virtual crossing} as illustrated in Figure~\ref{xing}. 
Note that we do not use here the usual drawing convention for virtual crossings. 
{\em Virtual isotopy} is an equivalence relation on virtual link diagrams generated by classical Reidemeister moves R1--3 
and virtual Reidemeister moves VR1--4 as illustrated in Figure~\ref{GRmoves}.
We remark that VR1--4 imply a {\em detour move}, 
which replaces an arc passing through a number of virtual crossings with any other such arc, with same endpoints. 
{\em Welded isotopy} is the extension of virtual isotopy which also allows the move OC as illustrated in Figure~\ref{OC}.
(Note that OC stands for Overcrossings Commute.)
A {\em welded link} is an equivalence class of virtual link diagrams 
under welded isotopy.
M. Goussarov, M. Polyak and O. Viro~\cite{GPV} essentially proved that 
welded isotopic classical link diagrams are equivalent, 
that is, they are deformed into each other by classical Reidemeister moves.  
Therefore, we can consider welded links as a natural generalization of the classical links.

In this paper, we introduce the {\em multiplexing} of a crossing for a virtual link diagram, 
as a local change on a classical crossing shown in Figure~\ref{multiplexing}.
Let $m_{i}$ be integers $(i=1,\ldots,n)$ and $D$ an ordered $n$-component virtual link diagram. 
By the multiplexing of all classical crossings of $D$, 
we obtain the virtual link diagram $D(m_{1},\ldots,m_{n})$ 
from $D$ associated with $(m_{1},\ldots,m_{n})$, 
see Section~\ref{sec-multiplexing} for the precise definition. 
We show that if virtual link diagrams $D$ and $D'$ are welded isotopic, 
then $D(m_{1},\ldots,m_{n})$ and $D'(m_{1},\ldots,m_{n})$ 
are welded isotopic for any $(m_{1},\ldots,m_{n})$ $\in\mathbb{Z}^{n}$ (Theorem~\ref{th-multiplexing}).

\begin{figure}[t]
  \begin{center}
    \begin{overpic}[width=5cm]{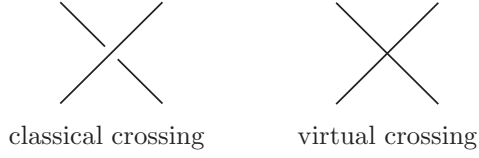}
      \put(-19,-15){classical crossing}
      \put(89,-15){virtual crossing}
    \end{overpic}
  \end{center}
  \vspace{1em}
  \caption{Classical and virtual crossings}
  \label{xing}
\end{figure}

\begin{figure}[t]
  \begin{center}
    \begin{overpic}[width=12cm]{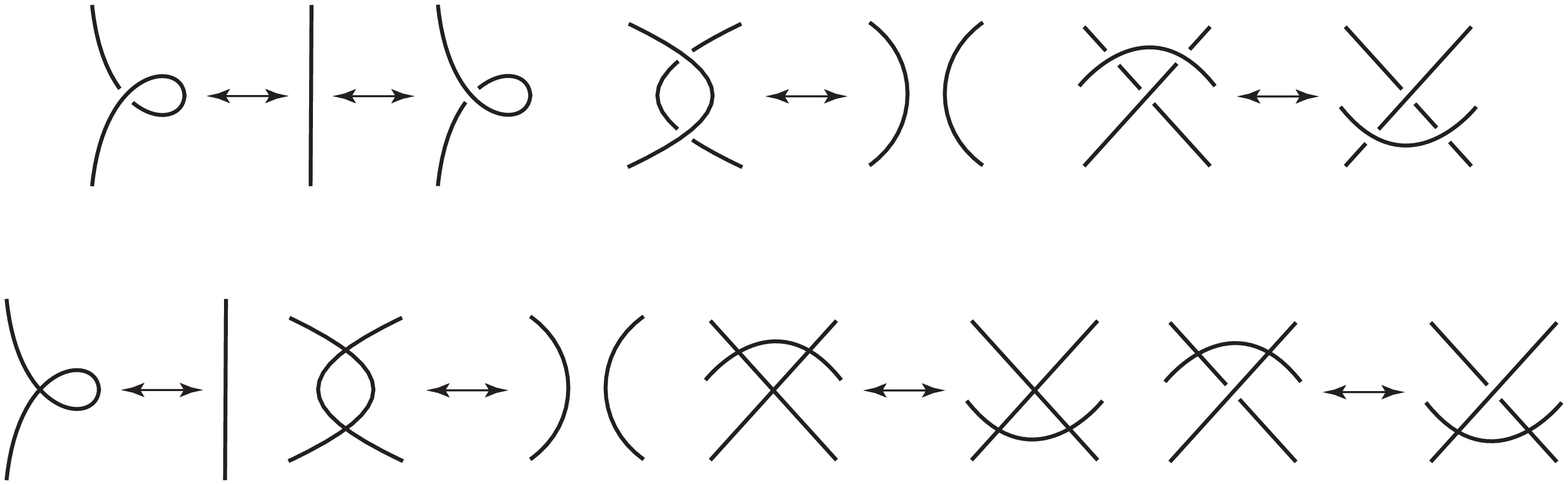}
      \put(24.5,25){VR1}
      \put(90.5,25){VR2} 
      \put(186,25){VR3}
      \put(287,25){VR4}
      \put(47.5,88){R1}
      \put(75,88){R1}
      \put(169.2,88){R2}
      \put(272.5,88){R3}
    \end{overpic}
  \end{center}
  \caption{Classical and virtual Reidemeister moves}
  \label{GRmoves}
\end{figure}

\begin{figure}[t]
  \begin{center}
    \begin{overpic}[width=4cm]{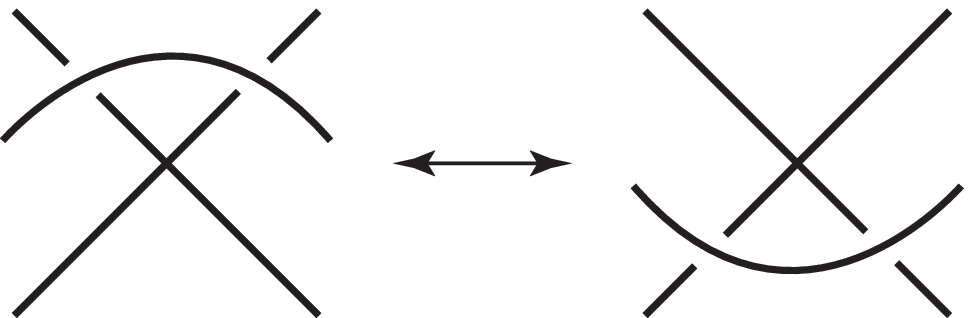}
      \put(50,23){OC}
    \end{overpic}
  \end{center}
  \caption{Move OC}
  \label{OC}
\end{figure}

The {\em group} of a virtual link diagram is known as a welded link invariant~\cite{K}.  
Hence by Theorem~\ref{th-multiplexing}, we have that the group $G(D(m_{1},\ldots,m_{n}))$ of 
$D(m_{1},\ldots,m_{n})$ is a link invariant of $D$. 
We remark that $G(D(m,\ldots,m))$ is isomorphic to 
the {\em generalized link group $G_{m}(D)$} defined by 
A.J. Kelly~\cite{Kelly} and M. Wada~\cite{W}, independently. 
Therefore, 
$G(D(m_{1},\ldots,m_{n}))$ is a generalization of $G_{m}(D)$. 
As an application, we show that 
for a non-zero integer $m$ and 
for {\em classical knot} diagrams $D$ and $D'$, 
$D$ is equivalent to $D'$ or its mirror image if and only if $D(m)$ is welded isotopic to $D'(m)$ or its mirror image (Theorem~\ref{th-iff}).

From the point of view of classical link theory, it seems very 
interesting  that $D(m_{1},\ldots,m_{n})$ could not be welded isotopic to a classical link diagram 
even if $D$ is a classical one, and 
new classical link invariants are expected from known welded link invariants via 
the multiplexing of crossings. 
For example, there is a $3$-component {\em classical} link diagram $D$ with trivial Alexander polynomial such that 
for $m_1\neq m_2$ and $m_3\neq 0$, the Alexander polynomial of $D(m_{1},m_{2},m_{3})$ is non-trivial  
and that $D(m_{1},m_{2},m_{3})$ is not welded isotopic to a classical link diagram~(Example~\ref{ex-newinv}).

\section{Multiplexing of crossings}
\label{sec-multiplexing}
Let $(m_{1},\ldots,m_{n})$ be an ordered set of integers and $D=D_{1}\cup\cdots\cup D_{n}$ an ordered $n$-component virtual link diagram.  
For a classical crossing of $D$ whose overpass belongs to $D_{j}$, 
we define the {\em multiplexing} of the crossing associated with $m_{j}$ as a local change shown in Figure~\ref{multiplexing}. 
When $m_{j}=0$, the multiplexing of the crossing is the virtualization of it. 
The number of classical crossings that
appear in the multiplexing of the crossing is the absolute value of $m_{j}$.
Let $D(m_{1},\ldots,m_{n})$ denote the virtual link diagram 
obtained from $D$ by the multiplexing of all classical crossings of $D$ associated with $(m_{1},\ldots,m_{n})$. 
Then we have the following theorem.

\begin{figure}[htbp]
  \begin{center}
    \begin{overpic}[width=10cm]{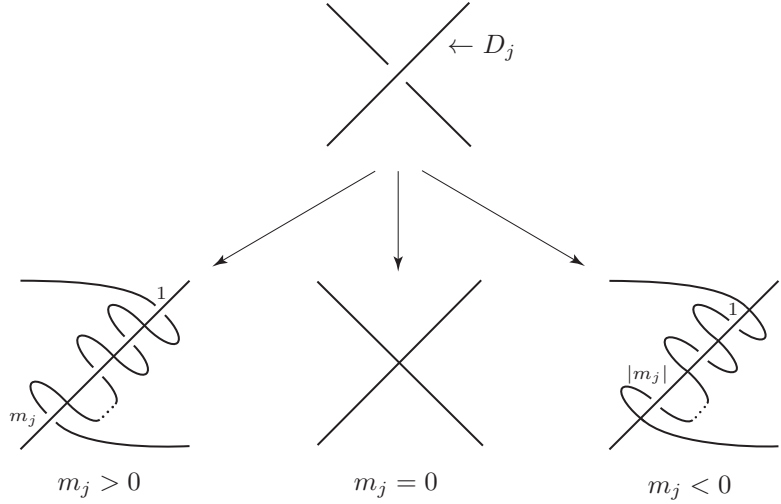}
      \put(14,-15){$m_{j}>0$}
      \put(125,-15){$m_{j}=0$}
      \put(235,-16){$m_{j}<0$}
      \put(160,150){$\leftarrow$ $D_{j}$}
      \put(51,57){{\scriptsize $1$}}
      \put(-4,11){{\scriptsize $m_{j}$}}
      \put(265,51){{\scriptsize $1$}}
      \put(227,27){{\scriptsize $|m_{j}|$}}
    \end{overpic}
  \end{center}
  \vspace{1em}
  \caption{Multiplexing of a crossing}
  \label{multiplexing}
\end{figure}

\begin{theorem}
\label{th-multiplexing}
If ordered $n$-component virtual link diagrams $D$ and $D'$ are welded isotopic, 
then for any $(m_{1},\ldots,m_{n})\in\mathbb{Z}^{n}$, $D(m_{1},\ldots,m_{n})$ and $D'(m_{1},\ldots,m_{n})$ 
are welded isotopic.
\end{theorem}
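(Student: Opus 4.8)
The plan is to show that the multiplexing operation sends each generating move of welded isotopy to a welded isotopy between the multiplexed diagrams. Welded isotopy is the equivalence relation generated by the local moves R1--R3, VR1--VR4 and OC, so $D$ and $D'$ are connected by a finite sequence of such moves; it therefore suffices to treat the case in which $D'$ is obtained from $D$ by a single move performed inside a small disk, with the rest of the diagram held fixed. A point to record at the outset is that the component labels are preserved by every move, so a classical crossing and its counterpart after the move have their overpasses on the same component $D_{j}$ and are multiplexed with the same integer $m_{j}$; this keeps the assignment $(m_{1},\ldots,m_{n})$ coherent on the two sides of each move.

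First I would dispose of the purely virtual moves VR1--VR3, which involve only virtual crossings. Since the multiplexing alters only classical crossings, $D(m_{1},\ldots,m_{n})$ and $D'(m_{1},\ldots,m_{n})$ differ by exactly the same move and are welded isotopic with no further work. The mixed move VR4 reduces to a detour: its classical crossing is replaced by a fixed local tangle of classical and virtual crossings as in Figure~\ref{multiplexing}, while the strand being slid meets this tangle only at virtual crossings, so it can be rerouted across the whole tangle by detour moves.

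Next I would carry out the substantive but routine classical checks for R1, R2 and OC. For R1 the single crossing of a kink becomes a stack of $|m_{j}|$ crossings after multiplexing, and I would remove these one layer at a time using R1 together with detour moves. For R2 the two cancelling crossings share the same overpass component and are thus multiplexed identically, so the two resulting stacks can be annihilated against one another by repeated R2 moves, with detour moves used to position the strands. For OC the classical crossings are likewise replaced by stacks separated by a virtual crossing, and I would commute these past one another by applying OC together with detour moves once for each pair of crossings drawn from the two stacks.

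The main obstacle is the R3 move. Three classical crossings, whose overpasses may lie on as many as three distinct components, are each replaced by a stack, and I must build an explicit welded isotopy between the two multiplexed configurations. The strategy is to slide the stack arising from the crossing that moves across the triangle through the region controlled by the other two stacks, pushing it over the relevant crossings by a combination of R3, R2, OC and detour moves, and then to reassemble the three stacks in their final positions. The delicate part is the bookkeeping: one must check that every intermediate crossing created during the isotopy has the correct over/under type and that only the allowed welded moves---never a forbidden under-commute---are invoked. Once R3 is verified, Theorem~\ref{th-multiplexing} follows by concatenating the single-move cases along the sequence connecting $D$ and $D'$.
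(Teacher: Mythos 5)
Your overall strategy coincides with the paper's: reduce to a single generating move performed in a disk, note that VR1--VR3 are untouched by multiplexing, and handle VR4 by a detour move. Up to that point you are on firm ground. The gap is that everything of substance in the proof lies in the remaining four cases, and you have not actually constructed the welded isotopies there. For R1, R2 and OC you assert that the multiplexed stacks can be removed or commuted ``one layer at a time'' by the corresponding classical move plus detours, but the multiplexed tangle is not a stack of independent parallel crossings: the overpass strand coils back and forth over the understrand, with virtual turnaround crossings in between. Consequently the bigon you would like to kill by R2, or the kink you would like to kill by R1, is not clean, and isolating it is itself a nontrivial welded isotopy. The paper handles this by first establishing auxiliary local moves (A, B, $\text{C}^{+}$, $\text{C}^{-}$, F in Lemma~\ref{lem-moves}); note that even the proof of move A already requires OC, so these cases do not reduce to ``the same move plus detours.''

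For R3 --- the heart of the theorem --- you explicitly defer the ``delicate bookkeeping'' of checking that no forbidden under-commutation is needed, and that deferred check is precisely the content of the proof: in the welded setting it is not automatic that a strand passing \emph{under} two multiplexed bundles can be slid across the triangle. The paper resolves this with the sliding move B (a square containing only virtual crossings slid past a multiplexed bundle), which has no counterpart in your outline, together with A, $\text{C}^{\pm}$ and F as in Figure~\ref{pf-R3}. A smaller sign that the configuration has not been analyzed: in an R3 move the overpasses of the three crossings lie on at most two strands (the top strand carries two of them), not ``as many as three distinct components.'' In short, the reduction is right but the proposal stops where the proof begins; to complete it you would need to produce explicit welded isotopies playing the role of the paper's Lemma~\ref{lem-moves}.
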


\begin{remark}
There are equivalent classical link diagrams $D$ and $D'$ such that 
$D(m_{1},\ldots,m_{n})$ and $D'(m_{1},\ldots,m_{n})$ are not {\em virtual} isotopic for some $(m_{1},\ldots,m_{n})$. 
For example, let $D$ be the classical knot diagram illustrated in the left-hand side of Figure~\ref{rem-virtual}. 
Then the virtual knot diagram $D(2)$ is not virtual isotopic to the 
trivial one~\cite{K}. 
Let $D'$ be the trivial knot diagram without crossings, 
then $D'(2)=D'$. 
Therefore, $D$ and $D'$ are equivalent, but $D(2)$ and $D'(2)$ are not virtual isotopic.
\end{remark}

\begin{figure}[htbp]
  \begin{center}
    \begin{overpic}[width=6cm]{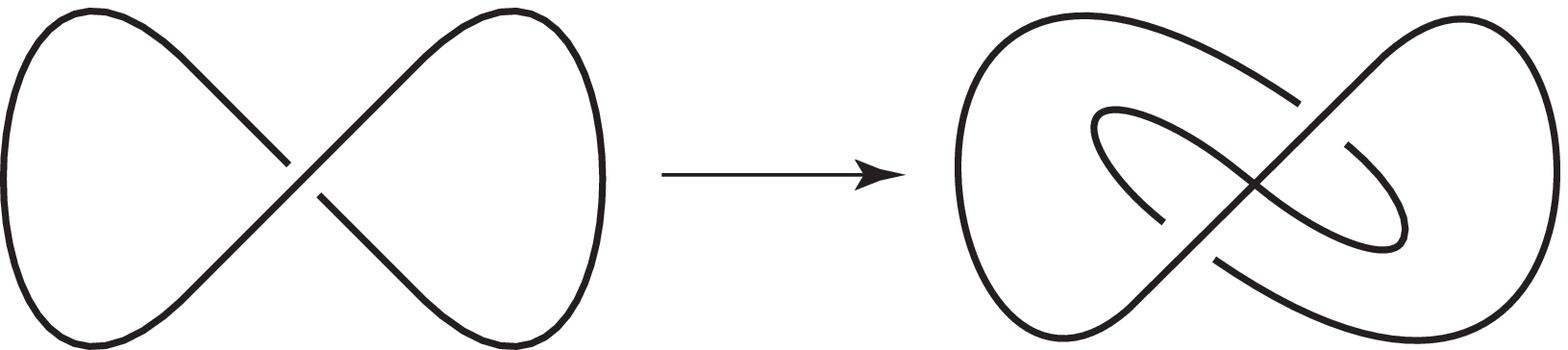}
      \put(30,-13){$D$}
      \put(127,-13){$D(2)$}
    \end{overpic}
  \end{center}
  \vspace{1em}
  \caption{$D(2)$ which is obtained from $D$ by the multiplexing of the crossing is not virtual isotopic to the trivial knot diagram.}
  \label{rem-virtual}
\end{figure}

\section{Generalized link groups}
Kelly~\cite{Kelly} and Wada~\cite{W}, independently, introduced a family of link invariants $G_{m}$ $(m\in\mathbb{Z})$ which are groups
generalizing the fundamental group of the complement of a classical link in the $3$-sphere $S^{3}$. 
Let $D$ be an oriented classical link diagram of a classical link $L$. 
The {\em generalized link group $G_{m}(D)$} of $D$ is defined as follows: 
Each arc of $D$ yields a generator, and each crossing of $D$ gives a relation 
as shown in Figure~\ref{wadarelation}. 
(Note that $G_{1}(D)\cong\pi_{1}(S^{3}\setminus L)$.) 
In~\cite{Kelly,W}, they proved that $G_{m}(D)$ is a classical link invariant. 
As we mentioned in Introduction, $G(D(m,\ldots,m))$ is isomorphic to $G_{m}(D)$. 
Hence, $D(m,\ldots,m)$ gives us a geometrical point of view for $G_{m}(D)$. 
Moreover, Theorem~\ref{th-multiplexing} implies that $G_{m}$ can be defined for not only classical link diagrams but also virtual link diagrams, 
and it is a welded link invariant.

\begin{figure}[htbp]
  \begin{center}
    \begin{overpic}[width=1.5cm]{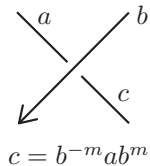}
      \put(8,38){$a$}
      \put(45,38){$b$}
      \put(38,9){$c$}
      \put(-3,-15){$c=b^{-m}ab^{m}$}
    \end{overpic}
  \end{center}
  \vspace{1em}
  \caption{A relation of the generalized link group $G_{m}(D)$}
  \label{wadarelation}
\end{figure}

It is well-known that 
the square knot $SK$ and the granny knot $GK$ are a pair of distinct knots with isomorphic fundamental groups. 
C. Tuffley~\cite{T} proved that $G_{m}(SK)$ and $G_{m}(GK)$ are not isomorphic for $m$ with $m\geq 2$. 
Moreover, 
S. Nelson and W.D. Neumann~\cite{NN} proved the following theorem.

\begin{theorem}\cite[Theorem 1.1]{NN}
\label{NN}
Let $m$ be an integer with $m\geq 2$,  
and let $D,D'$ be classical knot diagrams. 
$D$ is equivalent to $D'$ or $D'_{*}$ 
if and only if $G_{m}(D)\cong G_{m}(D')$, 
where $D'_{*}$ is the mirror image of $D'$. 
\end{theorem}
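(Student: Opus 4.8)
The plan is to reduce the statement to two classical pillars of three-manifold topology: Waldhausen's theorem, that a Haken three-manifold with boundary is determined up to homeomorphism by its fundamental group together with its peripheral structure, and the Gordon--Luecke theorem, that a knot in $S^{3}$ is determined by its complement (an orientation-reversing homeomorphism producing the mirror image). Since the complement $X=S^{3}\setminus K$ of a knot is Haken, the whole theorem follows once one shows that the abstract isomorphism type of $G_{m}(D)$ for $m\ge 2$ canonically encodes the pair $\big(\pi_{1}(X),\,\text{peripheral structure}\big)$. I would therefore organize the proof into (i) the easy ``only if'' direction, via invariance of $G_{m}$ and its behaviour under mirroring, and (ii) the ``if'' direction, via a structural analysis of $G_{m}$ that recovers the knot group with its meridian--longitude data.

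For (i), recall that $G_{m}$ is a classical knot invariant (Kelly, Wada), so equivalent diagrams have isomorphic groups. For the mirror image, note that in the presentation of Figure~\ref{wadarelation} a crossing of the opposite sign contributes the relation $c=b^{m}ab^{-m}$, so $D'_{*}$ presents the group obtained from $G_{m}(D')$ by replacing $m$ with $-m$. The substitution sending every generator $x$ to $x^{-1}$ carries $c=b^{-m}ab^{m}$ to $c=b^{m}ab^{-m}$, hence defines an isomorphism $G_{m}(D')\cong G_{-m}(D')\cong G_{m}(D'_{*})$. Consequently either $D\sim D'$ or $D\sim D'_{*}$ forces $G_{m}(D)\cong G_{m}(D')$.

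For (ii), I would first fix canonical data inside $G_{m}(D)$. The abelianization is $\mathbb{Z}$, and the quotient map $\alpha\colon G_{m}(D)\to\mathbb{Z}$ sends every Wirtinger generator (a meridian) to $1$; this map is preserved by any group isomorphism. Next I would identify intrinsically a meridian $\mu$ and the peripheral subgroup $P=\langle\mu,\lambda\rangle\cong\mathbb{Z}^{2}$ (for instance as a maximal abelian subgroup meeting $\alpha^{-1}(1)$, with $\lambda\in\ker\alpha$ singled out as the element commuting with $\mu$ that is homologically trivial). The heart of the argument is then to prove that, for $m\ge 2$, this package determines $\pi_{1}(X)$ together with its meridian and longitude --- here one must exploit the finer structure of $G_{m}$, namely the extra ``multiplexing'' information, absent when $m=1$, that records how the peripheral torus is attached and thereby rigidifies the longitude. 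Feeding the resulting peripheral-structure-preserving isomorphism $\pi_{1}(X)\cong\pi_{1}(X')$ into Waldhausen's theorem yields a homeomorphism $X\cong X'$, and Gordon--Luecke then gives $D\sim D'$, with an orientation-reversing homeomorphism accounting for the alternative $D\sim D'_{*}$.

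The main obstacle is precisely this canonical recovery in step (ii): one must show that an \emph{abstract} isomorphism $G_{m}(D)\cong G_{m}(D')$ is forced to respect the meridian and the longitude, and that from these data $G_{m}$ reconstructs the knot group itself rather than some lossy quotient or a collection of cyclic covers. The necessity of the hypothesis $m\ge 2$ is the telling point: for $m=1$ one has $G_{1}(D)\cong\pi_{1}(X)$, for which the statement fails, since the square and granny knots share a fundamental group yet are neither equivalent nor mirror images. Any correct proof must therefore locate and exploit the peripheral information that $G_{m}$ carries only when $m\ge 2$, consistent with Tuffley's result that $G_{m}$ separates these two knots in that range.
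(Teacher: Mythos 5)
First, a point of comparison: the paper offers no proof of Theorem~\ref{NN} at all --- it is quoted verbatim from Nelson and Neumann \cite{NN} and used as a black box in the proof of Theorem~\ref{th-iff}. So there is no in-paper argument to match yours against; what can be assessed is whether your outline would constitute a proof of the cited result. Your overall strategy (give the ``only if'' direction by invariance of $G_{m}$ and its behaviour under mirroring, then for the converse recover the knot group with peripheral structure and invoke Waldhausen and Gordon--Luecke) is indeed the general shape of the argument in \cite{NN}. The ``only if'' half is essentially fine: the inversion trick on generators does give $G_{m}(D')\cong G_{-m}(D')\cong G_{m}(D'_{*})$.

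The problem is that the decisive step is missing, and you say so yourself: the claim that an \emph{abstract} isomorphism $G_{m}(D)\cong G_{m}(D')$ for $m\geq 2$ canonically determines $\pi_{1}(S^{3}\setminus K)$ together with its peripheral structure is exactly the content of the theorem, and your proposal only describes what would need to be proved (``here one must exploit the finer structure of $G_{m}$\,\ldots'') rather than proving it. The specific mechanism you suggest --- locating the peripheral subgroup as a maximal abelian subgroup meeting $\alpha^{-1}(1)$, with the longitude singled out as a homologically trivial commuting element --- is not justified and cannot be taken over from the classical theory of knot groups, because for $m\geq 2$ the group $G_{m}(D)$ is \emph{not} the fundamental group of the knot exterior; maximal abelian subgroups of $G_{m}(D)$ need not be peripheral (already for torus knots the centre of the knot group obstructs the naive version of this claim), and nothing in your outline pins down a meridian up to automorphism. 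The route actually taken in \cite{NN} is to first identify $G_{m}(K)$ with the fundamental group of an explicit $3$-manifold obtained by gluing a Seifert-fibred piece to the knot exterior along the boundary torus (this is where $m\geq 2$ enters: the glued piece is nontrivial and survives in the JSJ decomposition, whereas for $m=1$ nothing is added), and then to read off the exterior and its peripheral structure from the canonical decomposition of that manifold before applying Waldhausen and Gordon--Luecke. Without this topological model, or some substitute for it, your step (ii) is a plan rather than a proof, so the proposal as written has a genuine gap at its core.
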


This theorem together with Theorem~\ref{th-multiplexing} implies the following.

\begin{theorem}
\label{th-iff}
Let $m$ be a non-zero integer $m$, 
and let $D,D'$ be classical knot diagrams. 
$D$ is equivalent to $D'$ or $D'_{*}$ 
if and only if $D(m)$ is welded isotopic to $D'(m)$ or $(D'(m))_{*}$.
\end{theorem}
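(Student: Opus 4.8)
The plan is to combine three ingredients: Theorem~\ref{th-multiplexing}, the fact (recalled in the text) that the group is a welded link invariant together with the isomorphism $G(D(m))\cong G_{m}(D)$, and the Nelson--Neumann Theorem~\ref{NN}. Two preliminary observations will do the bookkeeping. First, a \emph{commutation lemma}: multiplexing commutes with mirror image, i.e.\ $(D_{*})(m)$ and $(D(m))_{*}$ are welded isotopic. This is to be verified by inspecting the local tangle of Figure~\ref{multiplexing} and checking that a reflection of the plane carries the multiplexing of a crossing to the multiplexing of the reflected crossing. Second, the assignment $x\mapsto x^{-1}$ on generators induces an isomorphism $G_{m}(D)\cong G_{-m}(D)$, since it sends the defining relation $c=b^{-m}ab^{m}$ of Figure~\ref{wadarelation} to one equivalent to $c=b^{m}ab^{-m}$; in particular every nonzero $m$ can be replaced by $|m|$ at the level of groups.

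For the \emph{only if} direction, suppose $D$ is equivalent to $D'$ or to $D'_{*}$. Classical equivalence is a special case of welded isotopy, so Theorem~\ref{th-multiplexing} applies: if $D$ is equivalent to $D'$, then $D(m)$ and $D'(m)$ are welded isotopic; if $D$ is equivalent to $D'_{*}$, then $D(m)$ is welded isotopic to $(D'_{*})(m)$, which by the commutation lemma is welded isotopic to $(D'(m))_{*}$. In either case the right-hand side holds, and this argument is uniform in every nonzero $m$.

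For the \emph{if} direction I pass to groups, using that the group is a welded invariant. A welded isotopy between $D(m)$ and $D'(m)$ gives $G_{m}(D)\cong G_{m}(D')$ directly. A welded isotopy between $D(m)$ and $(D'(m))_{*}$ gives $G_{m}(D)\cong G((D'(m))_{*})$; but taking the mirror reverses every crossing of the virtual diagram $D'(m)$, replacing each relation $c=b^{-m}ab^{m}$ by $c=b^{m}ab^{-m}$, so $G((D'(m))_{*})\cong G_{-m}(D')\cong G_{m}(D')$ by the inversion isomorphism. Thus in both cases $G_{m}(D)\cong G_{m}(D')$, and applying the inversion isomorphism on both sides yields $G_{|m|}(D)\cong G_{|m|}(D')$. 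When $|m|\ge 2$, Theorem~\ref{NN} (with the integer $|m|$) concludes that $D$ is equivalent to $D'$ or to $D'_{*}$.

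The remaining and, in my view, most delicate point is the case $|m|=1$, where Theorem~\ref{NN} is unavailable, since the knot group $G_{1}$ does not determine a knot up to mirror image (the square and granny knots have isomorphic groups but are neither equivalent nor mirror images of each other). Here the group argument must be replaced by a direct geometric identification: the relation $c=b^{\mp1}ab^{\pm1}$ is precisely a Wirtinger relation, and correspondingly one expects $D(\pm1)$ to be welded isotopic to a \emph{classical} diagram representing $D$ or its mirror. Granting this, the statement for $|m|=1$ collapses, because by Goussarov--Polyak--Viro welded isotopic classical diagrams are classically equivalent; the right-hand side then reads ``$D$ is equivalent to $D'$ or to $D'_{*}$,'' which is exactly the left-hand side, and this condition is unaffected by replacing $D$ with $D_{*}$, so it is irrelevant whether $D(\pm1)$ reproduces $D$ or its mirror. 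The real content of the proof is therefore the two geometric inputs---commutation with mirror image and the identification of $D(\pm1)$ with a classical diagram---while the passage through Theorem~\ref{NN} and the reduction of every nonzero $m$ to the case $|m|\ge 2$ are routine.
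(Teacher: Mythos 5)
Your overall strategy coincides with the paper's: the only-if direction via Theorem~\ref{th-multiplexing} together with the identity $D'_{*}(m)=(D'(m))_{*}$, and the if direction by passing to $G(D(m))\cong G_{m}(D)$ and invoking Theorem~\ref{NN} when $|m|\geq 2$. Two of your detours are harmless but avoidable: where you convert $G_{m}(D)\cong G_{m}(D'_{*})$ into $G_{m}(D)\cong G_{m}(D')$ via the mirror symmetry and the inversion isomorphism $G_{m}\cong G_{-m}$, the paper simply applies Theorem~\ref{NN} to the pair $(D,D'_{*})$ and reads off ``$D$ is equivalent to $D'_{*}$ or $(D'_{*})_{*}=D'$,'' which needs no extra lemmas about how $G_{m}$ behaves under mirrors.

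The genuine gap is at $m=-1$. You reduce negative $m$ to $|m|$ only at the level of groups (via $x\mapsto x^{-1}$), which works when $|m|\geq 2$ because Theorem~\ref{NN} is available there, but for $m=-1$ you fall back on the assertion --- explicitly ``granted'' rather than proved --- that $D(-1)$ is welded isotopic to a classical diagram representing $D$ or $D_{*}$. That assertion is not routine: the $(-1)$-multiplexing is precisely Kauffman's virtualization of every crossing, and whether a globally virtualized classical diagram remains welded isotopic to a classical one is exactly the kind of question this paper is cautious about (the coincidence of Wirtinger-type presentations for $G_{\pm 1}$ shows only that the \emph{groups} agree, not the welded types). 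The paper closes this case differently and more cheaply: it observes that $(D(m))(-1)$ and $D(|m|)$ are welded isotopic for $m\leq -1$, so a welded isotopy $D(m)\simeq D'(m)$ can be fed back into Theorem~\ref{th-multiplexing} (multiplexing both sides by $-1$) to produce a welded isotopy $D(|m|)\simeq D'(|m|)$, reducing every negative $m$ --- including $m=-1$, where one lands on $D(1)=D$ and concludes by Goussarov--Polyak--Viro --- to the positive case. Note that this only requires the \emph{double} virtualization $(D(-1))(-1)$ to return to $D$, a much weaker statement than the single-virtualization claim you rely on. You should replace your $|m|=1$ paragraph with this reduction, or else supply an actual proof that $D(-1)$ is welded isotopic to $D$ or $D_{*}$.
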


\begin{proof}
Since we have that $D'_{*}(m)=(D'(m))_{*}$, 
the only if part immediately holds by Theorem~\ref{th-multiplexing}.

Thus, let us prove the if part. 
For $m=1$, it is trivial. 
Suppose that $m\geq 2$.  
If $D(m)$ is welded isotopic to $D'(m)$, 
then $G(D(m))\cong G(D'(m))$.
Therefore, $G_{m}(D)\cong G_{m}(D')$.
If $D(m)$ is welded isotopic to $(D'(m))_{*}=D'_{*}(m)$, 
then $G(D(m))\cong G(D'_{*}(m))$, and hence 
$G_{m}(D)\cong G_{m}(D'_{*})$. 
By Theorem~\ref{NN}, $D$ is equivalent to $D'$ or $D'_{*}$. 
If $m\leq -1$, then it is not hard to see that $D(|m|)$ and $(D(m))(-1)$ are welded isotopic. 
Hence, Theorem~\ref{th-multiplexing} implies that
if $D(m)$ and $D'(m)$ are welded isotopic, then 
$D(|m|)$ and $D'(|m|)$ are welded isotopic. 
Therefore, the proof follows from the case when $m\geq 1$.
\end{proof}

\section{Proof of Theorem~\ref{th-multiplexing}}
In this section, we will give a proof of Theorem~\ref{th-multiplexing}.
Let us first prove the following lemma.

\begin{lemma}
\label{lem-moves}
The local moves A, B, $\text{C}^{+}$ and $\text{C}^{-}$ illustrated in Figure~\ref{moves} are realized by welded isotopy. 
Here, the square bounded by dashed lines in the move B may contain virtual crossings but not classical crossings. 
\end{lemma}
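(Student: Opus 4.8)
The plan is to handle each of the four moves A, B, $\text{C}^{+}$, $\text{C}^{-}$ separately, and in each case to exhibit an explicit finite sequence of the generating moves of welded isotopy---the classical Reidemeister moves R1--3, the virtual Reidemeister moves VR1--4, and the move OC---carrying one side of the local move to the other. Since these are exactly the moves defining welded isotopy, producing such a sequence for each of the four moves proves the lemma.

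The central tool will be the detour move, which VR1--4 imply: it allows any arc passing only through virtual crossings to be replaced by any other arc having the same endpoints. This is precisely why move B carries the hypothesis that the dashed square contains no classical crossing. Under that hypothesis every strand inside the square meets the rest of the diagram only through virtual crossings, so the relevant arc may be slid across the box by a single detour move. For the remaining moves I expect the pattern to be: use detour moves to bring the classical crossings appearing in the move into a standard local position, apply the corresponding classical Reidemeister move---an R2- or R3-type cancellation for A, and an R1-type loop creation or removal for $\text{C}^{+}$ and $\text{C}^{-}$, with the superscript recording the sign of the loop---and then detour the virtual crossings back into place.

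Where a move involves a bundle of parallel overstrands---as arises naturally, since multiplexing replaces a single overpass by several strands---I would invoke OC to permute the order in which these overstrands cross a fixed understrand, thereby reducing to the case of a single overstrand, where the classical moves apply directly. Organizing the argument so that OC is applied only to overstrands, and detour moves only across purely virtual regions, keeps each case within the allowed repertoire of welded moves.

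The main obstacle I anticipate is the bookkeeping in move B. Although the detour move is the correct mechanism, one must verify that, after the arcs involved have been isotoped into the configuration of the move, the dashed square genuinely contains no classical crossing, so that the detour move is legitimately applicable; arranging the isotopy so that no classical crossing is inadvertently dragged across the box is the delicate point. Once this is set up correctly, each of A, B, $\text{C}^{+}$ and $\text{C}^{-}$ reduces to a short and transparent sequence of welded moves.
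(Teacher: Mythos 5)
Your overall framing---realize each of A, B, $\text{C}^{+}$ and $\text{C}^{-}$ by an explicit finite sequence of R1--3, VR1--4 and OC, with the detour move and OC as the main tools---is the right one, and it is how the paper proceeds. But the proposal stays at the level of a plan, and the specific mechanisms you guess at do not hold up. The most concrete problem is move B. You claim the arc can be slid across the dashed square ``by a single detour move'' because the square contains no classical crossings. The detour move, however, requires the \emph{arc being replaced} to pass through only virtual crossings; the hypothesis in move B is a condition on the square, not on the strands being slid, and what is being slid is a multiplexed bundle of parallel strands that carries classical crossings where it enters and leaves the local picture. Such a bundle cannot be detoured as a unit. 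In the paper, move B is obtained from virtual isotopy together with move A, and move A itself is realized using OC (via VR2, OC, VR4 and a detour move); since OC is not a consequence of the virtual moves, there is no reason to expect B to follow from detour moves alone, and your justification does not establish it.

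For A and $\text{C}^{\pm}$ you only describe an expected pattern (``an R2- or R3-type cancellation for A, an R1-type loop creation or removal for $\text{C}^{\pm}$'') without exhibiting the sequences, and the pattern is not the right one: in the paper, A is realized with no classical Reidemeister moves at all (VR2, OC, VR4 and a detour move), and $\text{C}^{\pm}$ are realized from A together with an auxiliary move F (itself just a detour move) and virtual isotopy---again no R1. The step ``use OC to reduce a bundle of $m$ overstrands to a single overstrand, where the classical moves apply directly'' is not a valid reduction: OC permutes two overcrossing arcs over a fixed understrand, but it does not merge $m$ parallel strands into one, and the classical crossings internal to the multiplexed gadget must be manipulated explicitly. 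To turn the plan into a proof you need to actually produce the sequences; the efficient order is to establish F and A directly from the generating moves first, and then derive B and $\text{C}^{\pm}$ from them rather than attacking all four independently.
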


\begin{figure}[htbp]
  \begin{center}
    \begin{overpic}[width=12cm]{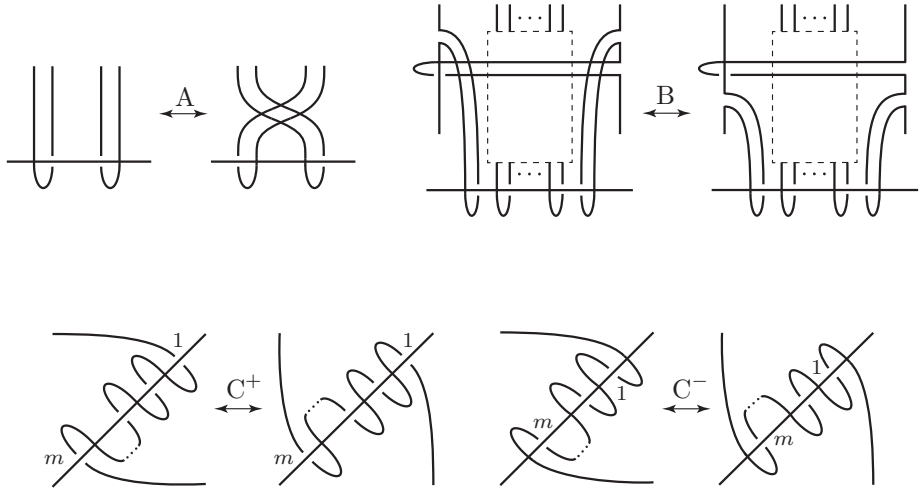}
    \put(63,144){A}
      \put(243,145){B}
      \put(82,34){$\text{C}^{+}$}
      \put(249,34){$\text{C}^{-}$}
      \put(62,53){{\footnotesize $1$}}
      \put(148,53){{\footnotesize $1$}}
      \put(14,9){{\footnotesize $m$}}
      \put(100,9){{\footnotesize $m$}}
      \put(228,33){{\footnotesize $1$}}
      \put(301,43){{\footnotesize $1$}}
      \put(197,22.5){{\footnotesize $m$}}
      \put(287,16){{\footnotesize $m$}}
    \end{overpic}
  \end{center}
  \caption{Local moves A, B, $\text{C}^{+}$ and $\text{C}^{-}$ realized by welded isotopy}
  \label{moves}
\end{figure}

\begin{proof}
\noindent
{\bf Move A}.~
See Figure~\ref{pf-tailexch}

\noindent
{\bf Move B}.~
See Figure~\ref{pf-sliding2}, where V denotes virtual isotopy.

\noindent
{\bf Moves $\text{C}^{+}$ and $\text{C}^{-}$}.~Let F be the local move illustrated in Figure~\ref{twist} which is realized by a detour move. 
Figure~\ref{pf-multiexch} (resp. Figure~\ref{pf-multiexch2}) indicates the proof for move $\text{C}^{+}$ (resp. $\text{C}^{-}$). 
While the proof is described only when $m=4$ in Figures~\ref{pf-multiexch} and~\ref{pf-multiexch2}, 
it is essentially same for any cases. 
\end{proof}

\begin{figure}[htbp]
  \begin{center}
    \begin{overpic}[width=12cm]{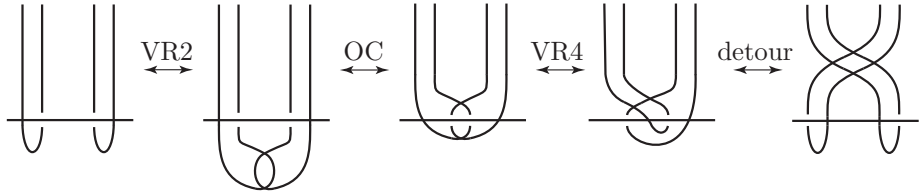}
      \put(50,49){VR2}
      \put(126,49){OC}
      \put(196,49){VR4}
      \put(266,49){detour}
    \end{overpic}
  \end{center}
  \caption{Proof for move A}
  \label{pf-tailexch}
\end{figure}

\begin{figure}[htbp]
  \begin{center}
    \begin{overpic}[width=12cm]{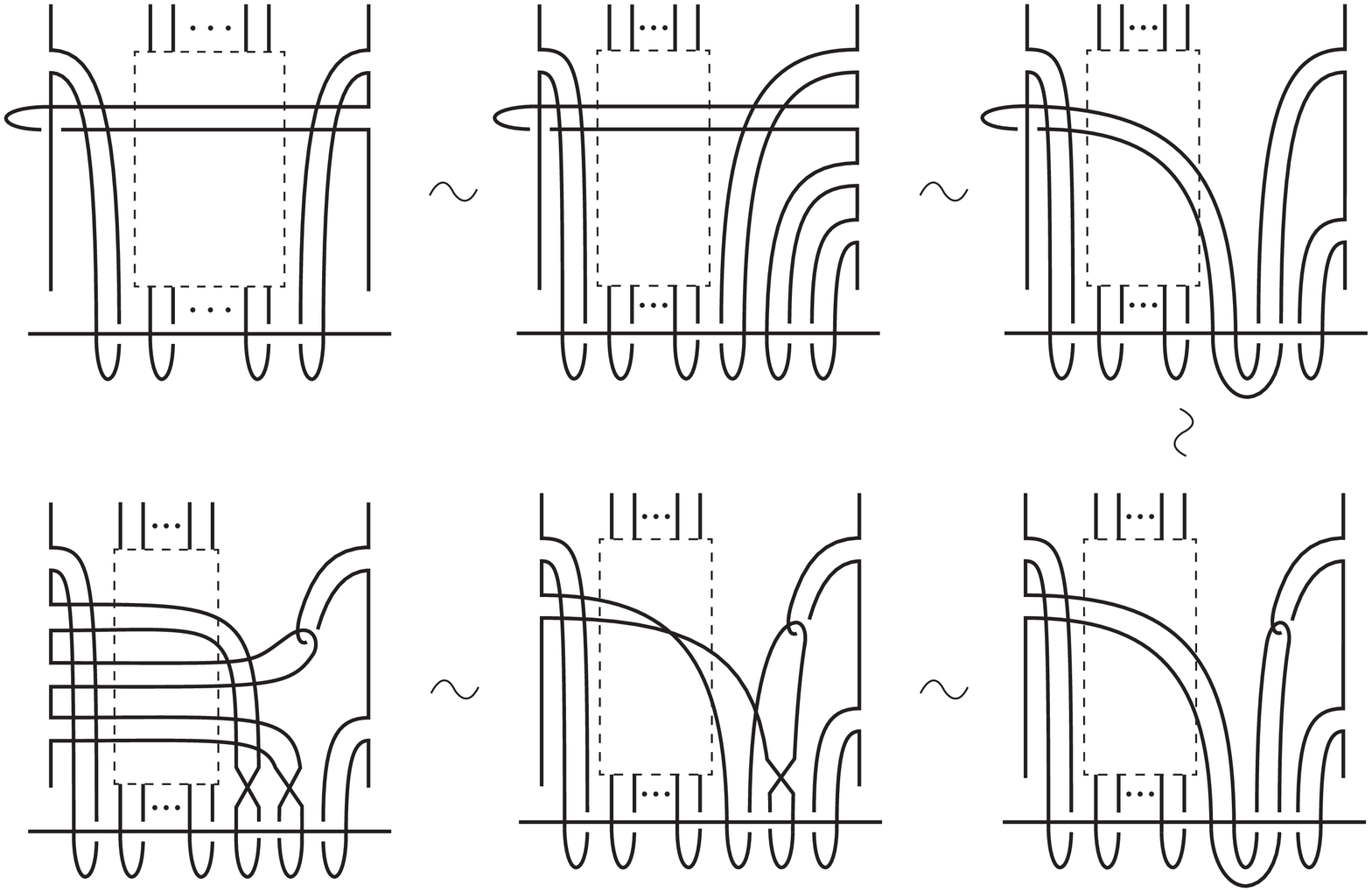}
      \put(110,178){V}
      \put(232,178){V}
      \put(301,110){V+OC}
      \put(110,54){V}
      \put(232,54){V}
    \end{overpic}
  \end{center}
\end{figure}
\begin{figure}[htbp]
  \begin{center}
    \vspace{-1.3em}
    \begin{overpic}[width=12cm]{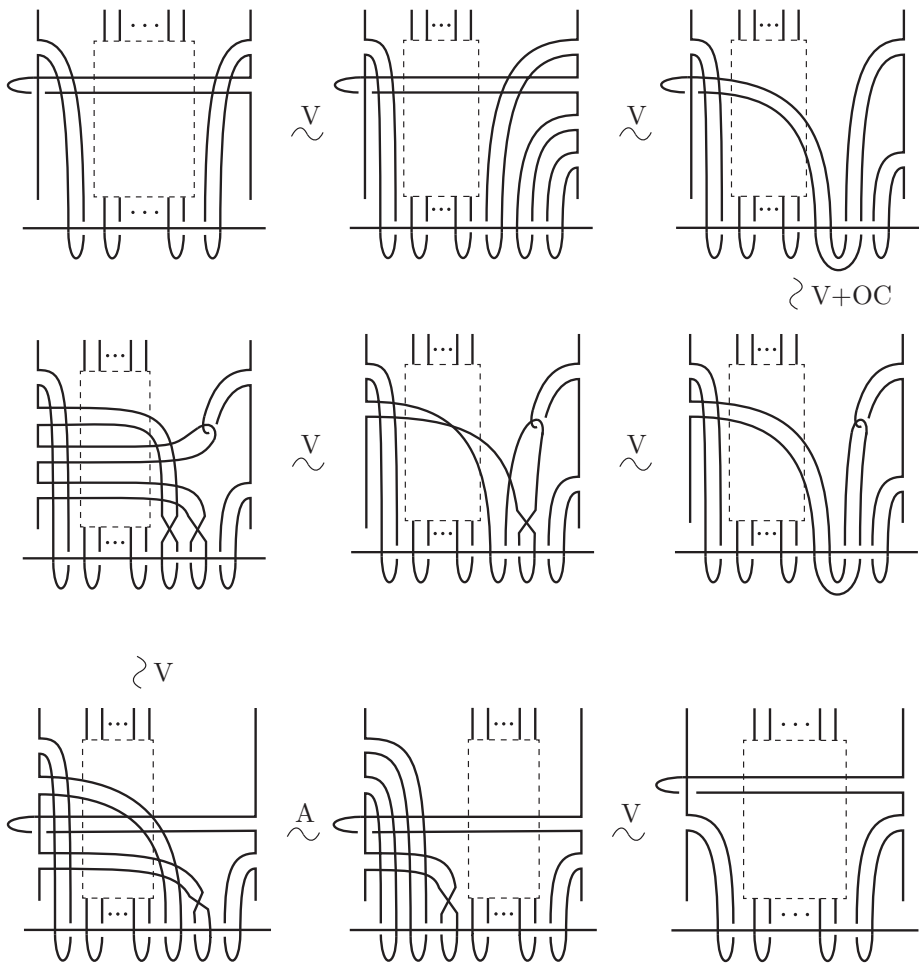}
      \put(55,106){V}
      \put(108,53){A}
      \put(230,53){V}
    \end{overpic}
  \end{center}
  \caption{Proof for move B}
  \label{pf-sliding2}
\end{figure}

\begin{figure}[htbp]
  \begin{center}
    \begin{overpic}[width=4.5cm]{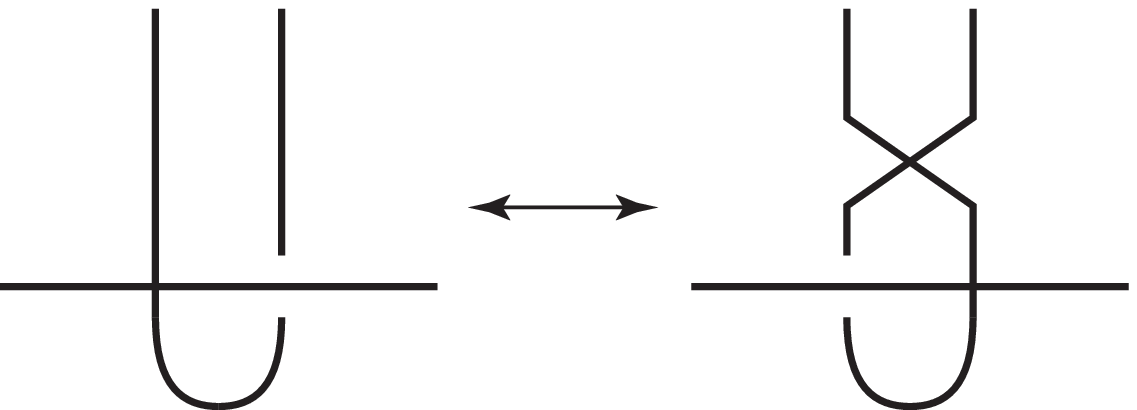}
      \put(61.5,26){F}
    \end{overpic}
  \end{center}
  \caption{Move F}
  \label{twist}
\end{figure}

\begin{figure}[htbp]
  \begin{center}
    \begin{overpic}[width=13cm]{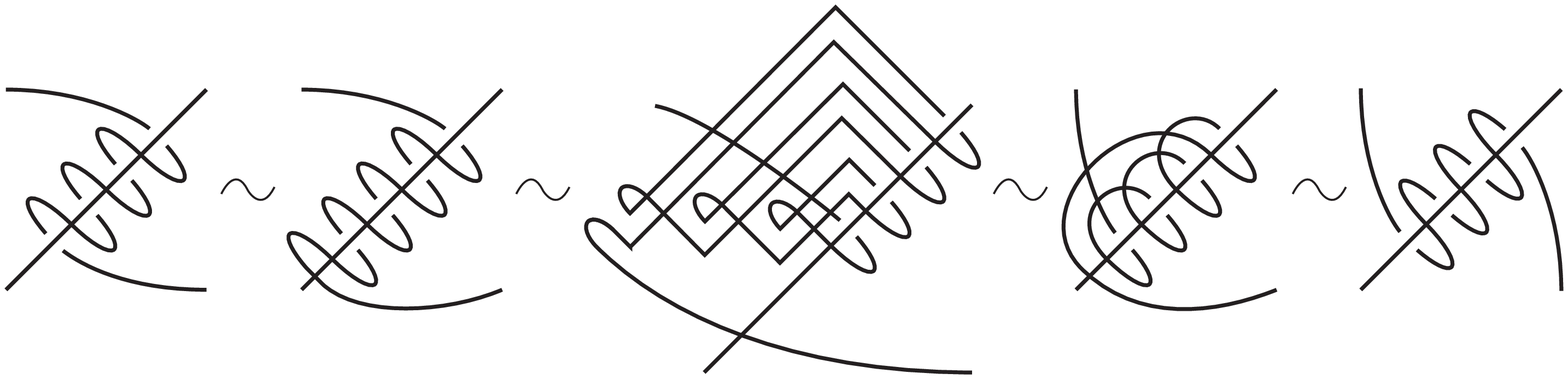}
      \put(55,48){V}
      \put(117,48){A$+$F}
      \put(237,48){V}
      \put(308,48){V}
    \end{overpic}
  \end{center}
  \caption{Proof for move $\text{C}^{+}$}
  \label{pf-multiexch}
\end{figure}

\begin{figure}[htbp]
  \begin{center}
    \begin{overpic}[width=10cm]{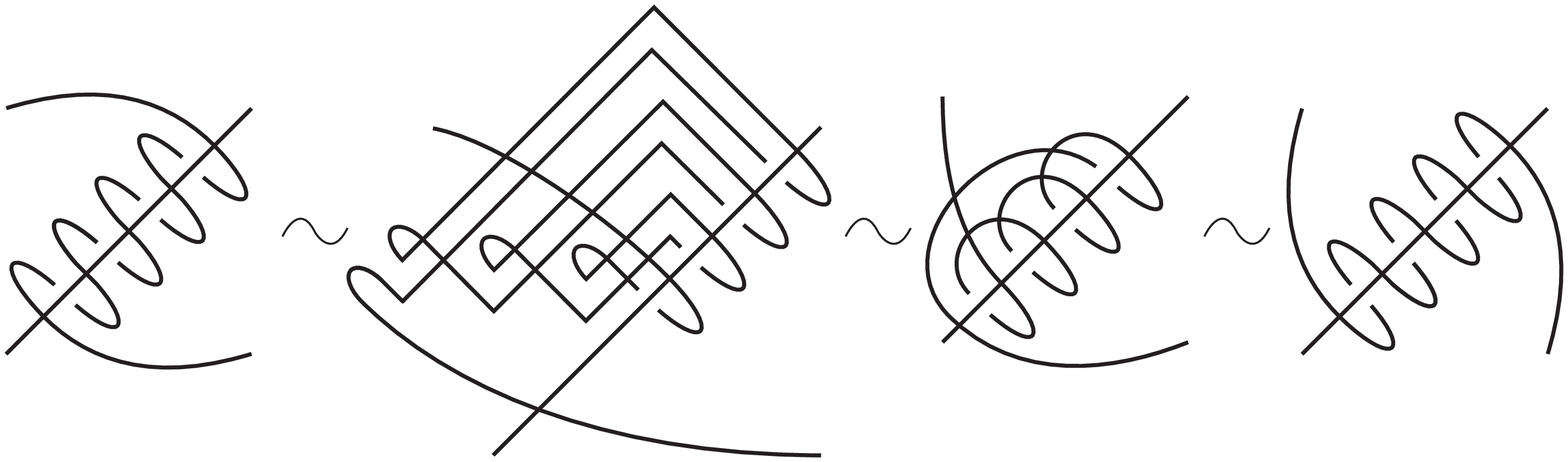}
      \put(46,45){A+F}
      \put(156,45){V}
      \put(221.5,45){V}
    \end{overpic}
  \end{center}
  \caption{Proof for move $\text{C}^{-}$}
  \label{pf-multiexch2}
\end{figure}

\begin{proof}[Proof of Theorem~\ref{th-multiplexing}]
It is enough to show that 
if $D$ and $D'$ are related by one of five moves R1, R2, R3, VR4 and OC, 
then $D(m_{1},\ldots,m_{n})$ and $D'(m_{1},\ldots,m_{n})$ are welded isotopic.

By using move $\text{C}^{+}$ or $\text{C}^{-}$, it is not hard to see that 
if $D$ and $D'$ are related by either R1 or R2, 
then $D(m_{1},\ldots,m_{n})$ and $D'(m_{1},\ldots,m_{n})$ are welded isotopic. 

If $D$ and $D'$ are related by a single VR4, 
then $D(m_{1},\ldots,m_{n})$ and $D'(m_{1},\ldots,m_{n})$ are related by a  detour move. 

If $D$ and $D'$ are related by a single R3, 
then $D(m_{1},\ldots,m_{n})$ and $D'(m_{1},\ldots,m_{n})$ are related by 
a finite sequence of virtual isotopy 
and moves~A, B, $\text{C}^{\pm}$ and F. 
Figure~\ref{pf-R3} indicates the proof when $m_{i}=3$ and $m_{j}=2$. 
In the general case, the proof is essentially same, 
where move $\text{C}^{-}$ is used instead of $\text{C}^{+}$ when $m_{i}$ is negative. 

If $D$ and $D'$ are related by a single OC, 
then by similar deformations as in Figure~\ref{pf-R3},
$D(m_{1},\ldots,m_{n})$ and $D'(m_{1},\ldots,m_{n})$ are related by 
a finite sequence of virtual isotopy and moves~A and $\text{C}^{\pm}$.
\end{proof}

\begin{figure}[htbp]
  \begin{center}
    \begin{overpic}[width=12cm]{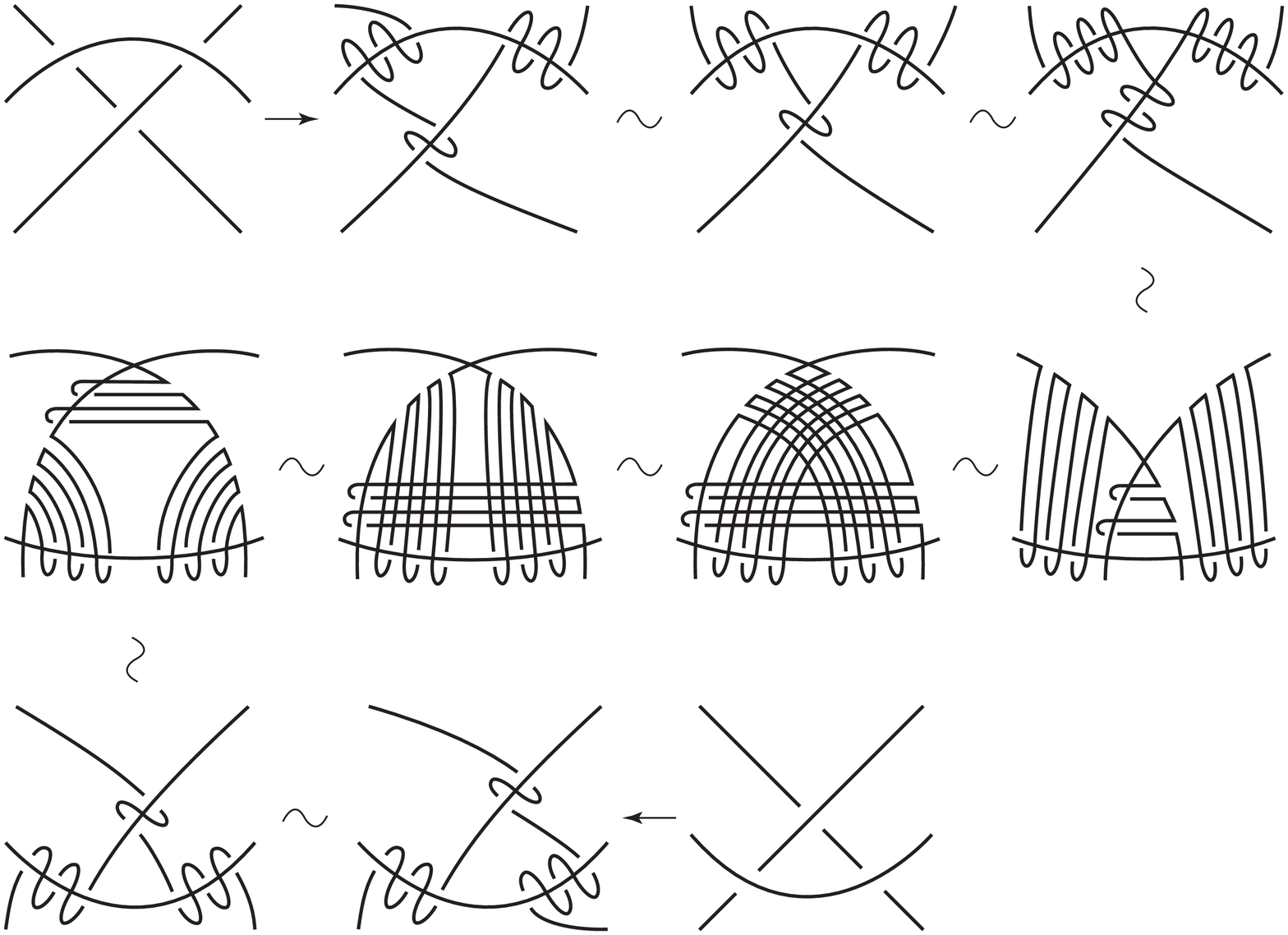}
      \put(30,174){$D$}
      \put(-20,228){$D_{i}\rightarrow$}
      \put(-13,200){$D_{j}\rightarrow$}
      \put(89,174){$D(m_{1},\ldots,m_{n})$}
      \put(166,221){$\text{C}^{+}$}
      \put(259,221){V}
      
      \put(310,167){V}
      \put(77,128){B}
      \put(159,128){A+F}
      \put(256,128){V}
      
      \put(41,68){V}
      \put(77,35){$\text{C}^{+}$}
      \put(93,-14){$D'(m_{1},\ldots,m_{n})$}
      \put(211,-14){$D'$}
    \end{overpic}
  \end{center}
  \vspace{1em}
  \caption{$D(m_{1},\ldots,m_{n})$ and $D'(m_{1},\ldots,m_{n})$ are related by a finite sequence of virtual isotopy and moves~A, B, $\text{C}^{+}$ and F when $m_{i}>0$.}
  \label{pf-R3}
\end{figure}

\begin{remark}
By using {\em Arrow calculus}, given by J.-B. Meilhan and the third author in~\cite{MY},  
we could prove Theorem~\ref{th-multiplexing} more simply.
It might be also possible to show Theorem~\ref{th-multiplexing} by using Gauss diagram. 
While our proof looks complicated, 
it is done by combining elementary deformations and, in particular, self-contained. 
\end{remark}

\section{Examples}
We are curious to have new {\em classical} link invariants from welded link invariants via the multiplexing of crossings. 
In fact, we have the following example.

\begin{example}
\label{ex-newinv}
Let $D=D_{1}\cup D_{2}\cup D_{3}$ be the ordered oriented $3$-component classical link diagram illustrated in Figure~\ref{ex-Alex}. 
Then, the Alexander polynomial $\Delta_{D}(t)$ of $D$ is $0$. 
On the other hand, $\Delta_{D(m_{1},m_{2},m_{3})}(t)=g(t)(t^{m_{1}}-t^{m_{2}})^{2}(1-t^{m_{3}})$, 
where $g(t)=\gcd{\{1-t^{m_{1}},1-t^{m_{2}},1-t^{m_{3}}\}}$. 
Therefore, $\Delta_{D(m_{1},m_{2},m_{3})}(t)$ is non-trivial for some $(m_{1},m_{2},m_{3})$
while $\Delta_{D}(t)$ vanishes. 
We remark that $D(m_{1},m_{2},m_{3})$ is not welded isotopic to a classical link diagram 
when $m_1\neq m_2$ 
since 
the intersection number of the 1st and 2nd components of $D(m_{1},m_{2},m_{3})$ is equal to $m_{1}-m_{2}$ $(\neq 0)$. 
\end{example}

\begin{figure}[htbp]
  \begin{center}
    \begin{overpic}[width=5cm]{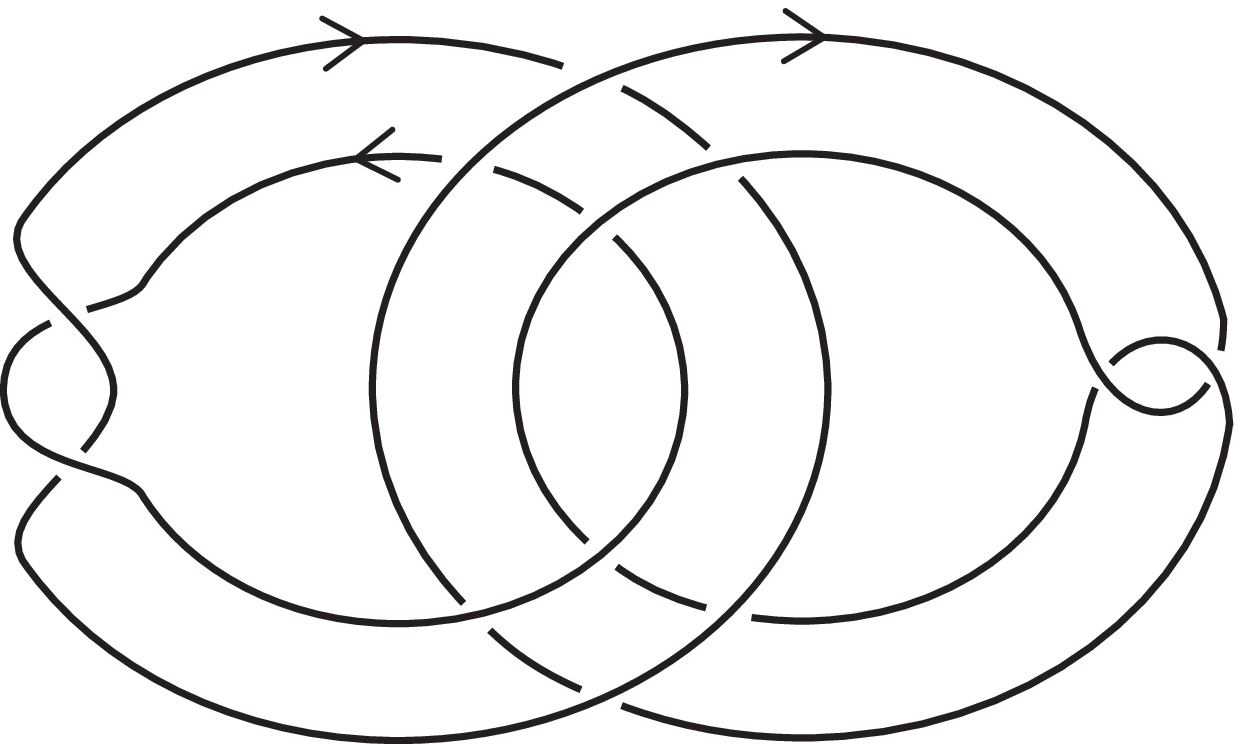}
      \put(20,83){$D_{1}$}
      \put(28,54){$D_{2}$}
      \put(106,83){$D_{3}$}
      \put(26,-15){$D=D_{1}\cup D_{2}\cup D_{3}$}
    \end{overpic}
  \end{center}
  \vspace{1em}
  \caption{An ordered oriented $3$-component classical link diagram with vanishing Alexander polynomial}
  \label{ex-Alex}
\end{figure}

In the example above, the $3$-variable Alexander polynomial of $D$ does not vanish. 
So far, we do not know if there is a classical link with vanishing multi-variable Alexander polynomial such that our invariants via the multiplexing of crossings survive. 
But, we have the following example.

\begin{example}
Let $D=D_{1}\cup D_{2}\cup D_{3}$ (resp. $D'=D'_{1}\cup D'_{2}\cup D'_{3}$) be the ordered oriented $3$-component virtual link diagram illustrated in the left-hand (resp. right-hand) side of Figure~\ref{ex-Alex2}. 
Then, the $3$-variable Alexander polynomials of $D$ and $D'$ 
are both equal to $(1-t_1)(1-t_2)(1-t_3)$. 
However, $\Delta_{D(m_{1},m_{2},m_{3})}(t)=(1-t^{m_{1}})^{2}(1-t^{m_{2}})(1-t^{m_{3}})$ and 
$\Delta_{D'(m_{1},m_{2},m_{3})}(t)=(1-t^{m_{1}})(1-t^{m_{2}})^{2}(1-t^{m_{3}})$. 
Therefore, $D$ and $D'$ can be distinguished by 
the 1-variable Alexander polynomials of 
$D(m_{1},m_{2},m_{3})$ and $D'(m_{1},m_{2},m_{3})$ while 
 the $3$-variable Alexander polynomials of $D$ and $D'$ coincide. 
\end{example}

\begin{figure}[htbp]
  \begin{center}
    \begin{overpic}[width=8cm]{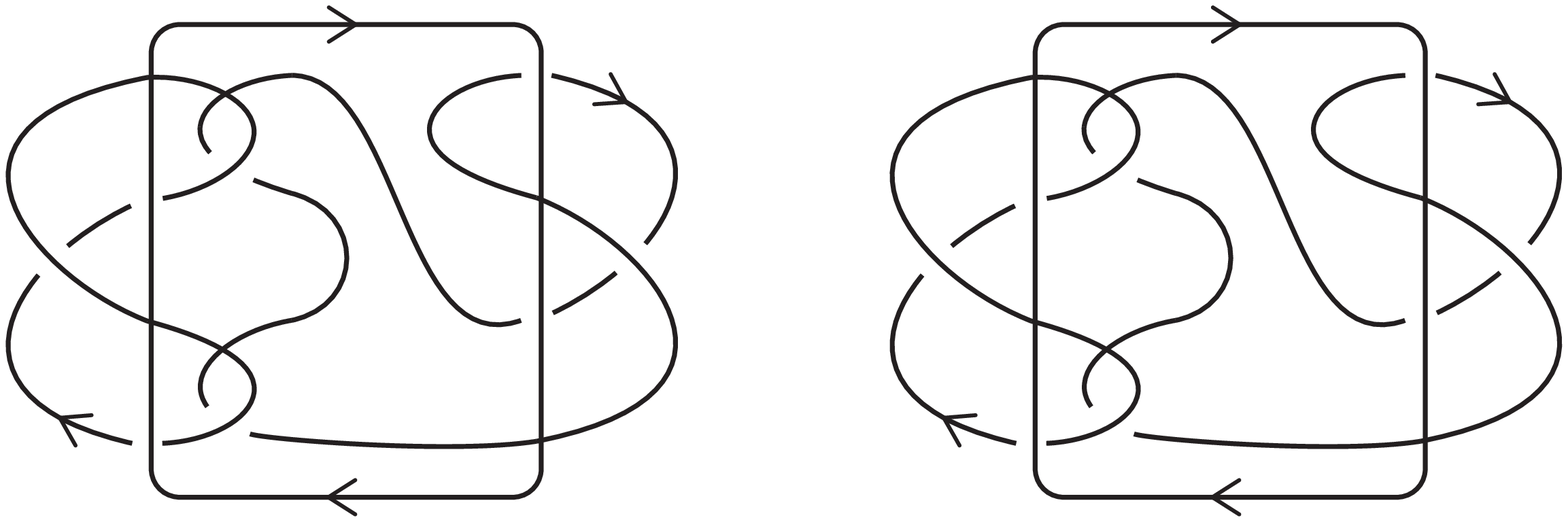}
      \put(-13,35){$D_{2}$}
      \put(44,79){$D_{1}$}
      \put(96,35){$D_{3}$}
      \put(9,-15){$D=D_{1}\cup D_{2}\cup D_{3}$}
      \put(118,35){$D'_{1}$}
      \put(173,79){$D'_{2}$}
      \put(227,35){$D'_{3}$}
      \put(137,-15){$D'=D'_{1}\cup D'_{2}\cup D'_{3}$}
    \end{overpic}
  \end{center}
  \vspace{1em}
  \caption{Two ordered oriented $3$-component virtual link diagrams with the same $3$-variable Alexander polynomial}
  \label{ex-Alex2}
\end{figure}

Suppose that each $m_{i}$ is equal to either $0$ or $1$. 
Then by the definition of the multiplexing of crossings, 
an invariant of $D(m_{1},\ldots,m_{n})$ might be weaker than that of $D$. 
(Note that $D(1,\ldots,1)=D$ and $D(0,\ldots,0)$ is a diagram of the $n$-component trivial link.) 
But even if some $m_{i}$'s are $0$, 
it seems still interesting to consider $D(m_{1},\ldots,m_{n})$. 
Because it would give us useful invariants that are handled easily. 
For example, we have the following.

\begin{example}
Let $D=D_{1}\cup D_{2}\cup D_{3}$ be the ordered oriented $3$-component link diagram illustrated in the left-hand side of Figure~\ref{ex-Borromean}. 
Then, the second Alexander polynomial of $D(1,1,0)$ is equal to $(1-t)^{2}$. 
Hence, $D(1,1,0)$ provides a concise way to 
determine that $D$ is non-trivial. 
\end{example}

\begin{figure}[htbp]
  \begin{center}
    \begin{overpic}[width=7cm]{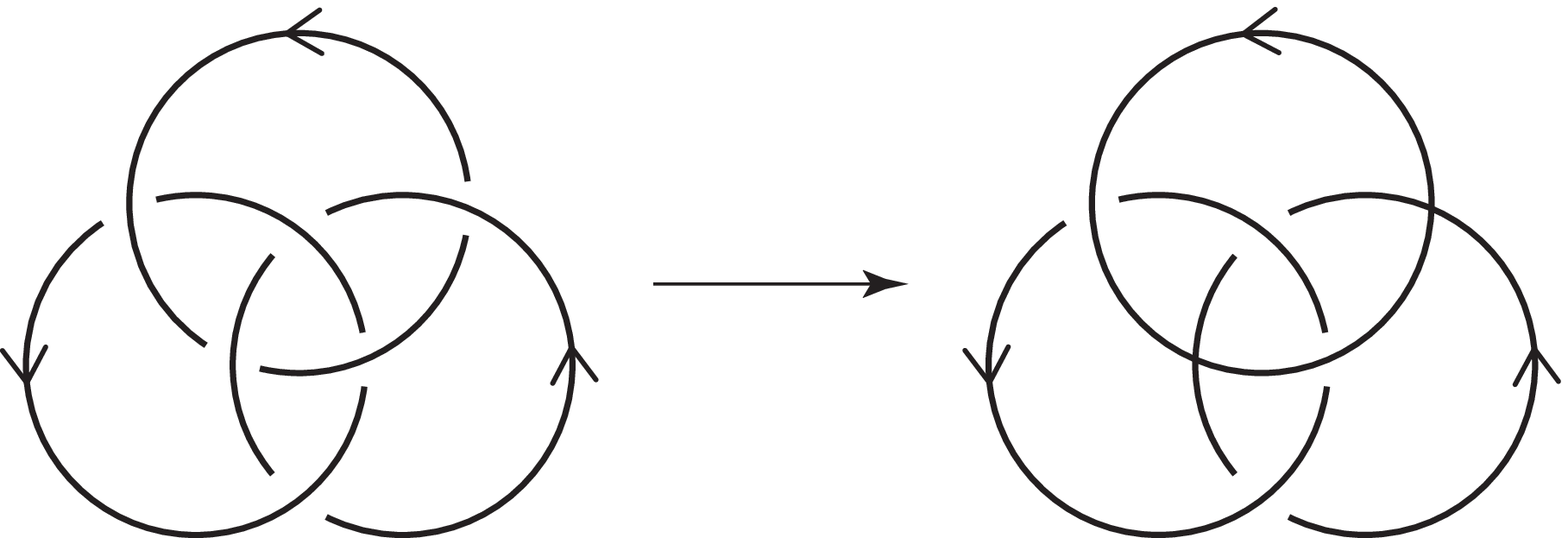}
      \put(13,64){$D_{1}$}
      \put(-7,7){$D_{2}$}
      \put(72,7){$D_{3}$}
      \put(-3,-15){$D=D_{1}\cup D_{2}\cup D_{3}$}
      \put(141,-15){$D(1,1,0)$}
    \end{overpic}
  \end{center}
  \vspace{1em}
  \caption{}
  \label{ex-Borromean}
\end{figure}

\begin{acknowledgements}
The authors would like to thank Professor J. Scott Carter for 
informing a result in~\cite{NN} 
which is helped showing Theorem~\ref{th-iff}.
This work was supported by JSPS KAKENHI Grant Numbers 
JP26400098, JP17J08186, JP17K05264.
\end{acknowledgements}


\end{document}